\documentclass[11pt]{article}

\topmargin  = 0 in \oddsidemargin = 0.25 in
\setlength{\textheight}{8.6 in} \setlength{\textwidth}{6 in}
\setlength{\unitlength}{1.0 mm}

\usepackage{amsmath}
\usepackage{amsfonts}
\usepackage{color}
\usepackage{amssymb}
\usepackage{graphicx}
\usepackage{hyperref,arydshln}
\usepackage{enumerate}
\usepackage[all]{xy}

\def\tr{{\rm tr}}

 \allowdisplaybreaks

\begin{document}

\newtheorem{problem}{Problem}

\newtheorem{theorem}{Theorem}[section]
\newtheorem{corollary}[theorem]{Corollary}
\newtheorem{definition}[theorem]{Definition}
\newtheorem{conjecture}[theorem]{Conjecture}
\newtheorem{question}[theorem]{Question}
\newtheorem{lemma}[theorem]{Lemma}
\newtheorem{proposition}[theorem]{Proposition}
\newtheorem{quest}[theorem]{Question}
\newtheorem{example}[theorem]{Example}

\newenvironment{proof}{\noindent {\bf
Proof.}}{\rule{2mm}{2mm}\par\medskip}

\newenvironment{proofof3}{\noindent {\bf
Proof of  Theorem 1.2.}}{\rule{2mm}{2mm}\par\medskip}

\newenvironment{proofof5}{\noindent {\bf
Proof of  Theorem 1.3.}}{\rule{2mm}{2mm}\par\medskip}

\newcommand{\remark}{\medskip\par\noindent {\bf Remark.~~}}
\newcommand{\pp}{{\it p.}}
\newcommand{\de}{\em}

\title{  {A new matrix inequality involving partial traces}\thanks{
This paper was firstly announced on Feb, 2020, and was later published on Operators and Matrices, 15 (2021), no.3, 1189--1199. 
See \url{http://oam.ele-math.com/15-75}. 
 E-mail addresses: 
ytli0921@hnu.edu.cn (Y. Li), 
wjliu6210@126.com (W. Liu), 
FairyHuang@csu.edu.cn (Y. Huang, corresponding author).} }
\date{June 9, 2021}

\author{Yongtao Li$^{a}$,  Weijun Liu$^b$, Yang Huang$^{a,\dag}$\\
{\small ${}^a$School of Mathematics, Hunan University} \\
{\small Changsha, Hunan, 410082, P.R. China } \\
{\small $^b$School of Mathematics and Statistics, Central South University} \\
{\small New Campus, Changsha, Hunan, 410083, P.R. China. } }

\maketitle

\vspace{-0.5cm}

\begin{abstract}
Let $A$ be an $m\times m$ positive semidefinite  block matrix with each block being $n$-square.  
We write $\mathrm{tr}_1$ and $\mathrm{tr}_2$  for the first and second partial trace, respectively. 
In this paper, we prove the following inequality 
\[ 
(\tr A)I_{mn}  - (\tr_2 A) \otimes I_n
\ge \pm  \bigl( I_m\otimes (\tr_1 A)  -A\bigr).
\] 
This inequality is not only  a generalization of 
Ando's result [ILAS Conference (2014)] and Lin [Canad. Math. Bull. 59 (2016) 585--591], but it also  
could be regarded as a complement of a recent result of  
Choi [Linear Multilinear Algebra 66 (2018) 1619--1625]. 
Additionally, some new partial traces inequalities for positive semidefinite block matrices  
are also included. 
 \end{abstract}

{{\bf Key words:}  
Partial traces;  
Block matrices; 
Positive semidefinite;  
Cauchy-Khinchin.  } \\

{{\bf 2010 Mathematics Subject Classification.}  15A45, 15A60, 47B65.}

\section{Introduction}

\label{sec1} 

We use the following standard notation; see, e.g., \cite{Bha97} and \cite{HJ13}. 
The set of $n\times n$ complex matrices is denoted by $\mathbb{M}_n(\mathbb{C})$, 
or simply by $\mathbb{M}_n$, 
and the identity matrix of order $n$ by  $I_n$, or $I$ for short. 
If $A=[a_{ij}]$ is of order $m\times n$ and  
 $B$ is of order $s\times t$, the {\it tensor product} of $A$ with $B$, 
denoted by $A\otimes B$, which is an $ms\times nt$ matrix that  
partitioned into $m\times n$ block matrix with the $(i,j)$-block being the $s\times t$ matrix $a_{ij}B$. 
By convention, if $X\in \mathbb{M}_n$ is positive semidefinite, then we  write $X\ge 0$. 
For two Hermitian matrices $A$ and $B$ of the same order, $A\ge B$ stands for $A-B\ge 0$; 
see \cite[Chapter 1]{Zhan02} and \cite{Zhan13}.  
In this paper, 
we are interested in complex block matrices. Let $\mathbb{M}_m(\mathbb{M}_n)$ 
be the set of complex matrices partitioned into $m\times m$ blocks 
with each block being an $n\times n$ matrix. 
The element of $\mathbb{M}_m(\mathbb{M}_n)$ is usually written as ${ A}=[A_{i,j}]_{i,j=1}^m$ 
with $A_{i,j}\in \mathbb{M}_n$ for every $1\le i,j\le m$.

For $A=[A_{i,j}]_{i,j=1}^m\in \mathbb{M}_m(\mathbb{M}_n)$, 
we define the {\it partial transpose} of $A$ by $A^{\tau}=[A_{j,i}]_{i,j=1}^m$. 
It is clear that $A\ge 0$ does not necessarily imply $A^{\tau}\ge 0$. 
For instance, taking 
\[ A=\begin{bmatrix} 
A_{1,1} & A_{1,2} \\ A_{2,1} & A_{2,2} \end{bmatrix}=
\left[\begin{array}{cc;{2pt/2pt}cc}
1&0&0&1\\
0&0&0&0\\
\hdashline[2pt/2pt] 
0&0&0&0\\
1&0&0&1 \end{array}\right] \ge 0.\]
It follows by definition that  
\[ A^{\tau}=\begin{bmatrix} 
A_{1,1} & A_{2,1} \\ A_{1,2} & A_{2,2} \end{bmatrix}=
\left[\begin{array}{cc;{2pt/2pt}cc}
1&0&0&0\\
0&0&1&0\\
\hdashline[2pt/2pt] 
0&1&0&0\\
0&0&0&1 \end{array}\right]. \]
One could easily observe that $A^{\tau}$ is not positive semidefinite 
since it contains a principal submatrix $
\left[\begin{smallmatrix}0 & 1 \\ 1 &0 \end{smallmatrix}\right] \ngeq 0$. 
If both $A$ and $A^{\tau}$ are positive semidefinite, 
then $A$ is said to be {\it positive partial transpose} or PPT for short; see \cite{Lin14,Lin15,Lee15}. 
For more explanations of the partial transpose and  PPT, 
we recommend a comprehensive survey \cite{Bh07}, 
and see, e.g., \cite{Choi17,Choi532,Choi17b,Zhang19} for more recent results.

Now we introduce the definition and notation of partial traces, 
which comes from Quantum Information Theory \cite[pp. 10--12]{Petz08}.  
For $A\in \mathbb{M}_m(\mathbb{M}_n)$, 
the first partial trace map $A \mapsto \mathrm{tr}_1 A \in \mathbb{M}_n$ is defined as the  
adjoint map of the imbedding map $X \mapsto I_m\otimes X\in \mathbb{M}_m\otimes \mathbb{M}_n$. 
Correspondingly, the second partial trace map  $A \mapsto \mathrm{tr}_2 A\in \mathbb{M}_m$ is 
similarly defined as the adjoint map of the imbedding map 
$Y\mapsto Y\otimes I_n \in \mathbb{M}_m\otimes \mathbb{M}_n$. Therefore, we have 
\begin{equation*} \label{eqdef} 
\langle I_m\otimes X, A \rangle =\langle X, \mathrm{tr}_1A \rangle ,
\quad \forall X\in \mathbb{M}_n, 
\end{equation*}
and 
\[ \langle Y\otimes I_n, A \rangle =\langle Y,\mathrm{tr}_2 A \rangle, 
\quad \forall Y\in \mathbb{M}_m. \]
Assume that $A=[A_{i,j}]_{i,j=1}^m$ with $A_{i,j}\in \mathbb{M}_n$, 
  equivalent forms of the first and second partial trace 
are  given in  \cite[pp. 120--123]{Bh07} as
\begin{equation*} \label{eqdef2}
 \mathrm{tr}_1 { A}=\sum\limits_{i=1}^m A_{i,i} ~~\text{and}~~ 
\mathrm{tr}_2{ A}=\bigl[ \mathrm{tr}A_{i,j}\bigr]_{i,j=1}^m. 
\end{equation*}

As we all know, these two partial traces maps are linear and trace-preserving. 
Furthermore, if ${ A}=[A_{i,j}]_{i,j=1}^m \in \mathbb{M}_m(\mathbb{M}_n)$ is positive semidefinite,  
it is easy to see that  both $\mathrm{tr}_1 A$ and $\mathrm{tr}_2 A$ 
are positive semidefinite; see, e.g., \cite[p. 237]{Zhang11} or \cite[Theorem 2.1]{Zha12}. 
To some extent, 
these two partial traces are closely related.  
For instance, Ando \cite{Ando14}  established  

\begin{equation*}
(\tr A)I_{mn} +  A \ge  
I_m\otimes (\mathrm{tr}_1 A)  + (\mathrm{tr}_2 A) \otimes I_n.
\end{equation*}
We refer to \cite{Lin16} for an alternative proof. Equivalently, it can be written as 
\begin{equation}  \label{eqando}
 (\tr A)I_{mn} - (\mathrm{tr}_2 A) \otimes I_n \ge  
I_m\otimes (\mathrm{tr}_1 A)   -  A.   
\end{equation}
Moreover, Choi  recently investigated the first partial trace in \cite{Choi17}  and presented  
\[ I_m\otimes \tr_1 A^{\tau} \ge  A^{\tau},\] 
Meanwhile, Choi also proved in \cite{Choi17} that if $A\in \mathbb{M}_2(\mathbb{M}_n)$ is positive semidefinite, then 
\[ I_2 \otimes (\tr_1 A) + (\tr_2 A) \otimes I_n \ge A. \] 
Furthermore, Choi \cite{Choi17b} gave  a further extension and showed 
\begin{equation} \label{eqchoi}
\text{$(\tr_2 A^{\tau})\otimes I_n\ge \pm A^{\tau}$ ~~and~~ $ 
I_m\otimes \tr_1 A^{\tau} \ge \pm A^{\tau}$.}
\end{equation}

We observe in (\ref{eqando}) that the positivity of  $A$ leads to   
\[ (\tr A)I_m =\sum_{i=1}^m (\tr A_{i,i})I_m =
\bigl( \tr (\tr_2 A)\bigr)I_m \ge \tr_2 A,\]
 which guarantees that 
$(\tr A)I_{mn} - (\mathrm{tr}_2 A) \otimes I_n $ is positive semidefinite. 
However, the two matrices of right hand side in (\ref{eqando}) might be incomparable. 
A PPT condition  on block matrix $A$ was proposed 
to ensure $I_m\otimes (\mathrm{tr}_1 A)  \ge A$; 
see \cite{Choi17b} or   \cite[Corollary 2.2]{Li20laa} for more details. 

As we have already discussed above, and motivated by Choi's result (\ref{eqchoi}), 
 we will give a new partial traces inequality  (Theorem \ref{thm22}), 
which   could be viewed as  a generalization of Ando's result (\ref{eqando}) 
and also a complement of Choi's result (\ref{eqchoi}). 

The paper is organized as follows. 
We first introduce an efficient and useful lemma, which was first proved by Lin \cite{Lin14}. 
We will provide an alternative short proof of this lemma for completeness   
 and then utilize it to prove Theorem \ref{thm22}.  
 Additionally,  
 we present some new partial traces inequalities 
 (Theorem \ref{thm25} and Corollary \ref{coro26})
for positive semidefinite block matrices. 
As an application on numerical analysis,  
we  give some generalizations of the famous Cauchy-Khinchin inequality (Corollary \ref{coro24} and \ref{coro25}).

\section{Main result}
\label{sec2}

A  map (not necessarily linear) $\Phi: \mathbb{M}_n\to \mathbb{M}_k$ is called positive 
if it maps positive semidefinite matrices 
to positive semidefinite matrices. 
A  map $\Phi: \mathbb{M}_n\to \mathbb{M}_k$  is said to be {\it $m$-positive} if 
for every $[A_{i,j}]_{i,j=1}^m\in \mathbb{M}_m(\mathbb{M}_n)$,  
\begin{equation}  \label{eq1}
[A_{i,j}]_{i,j=1}^m \ge 0 \Rightarrow [\Phi (A_{i,j})]_{i,j=1}^m\ge 0. 
\end{equation}
The   map $\Phi$ is said to be {\it completely positive} 
if (\ref{eq1}) holds for every positive integer $m\ge 1$. 
It is well-known that both the trace map and  determinant map 
are  completely positive; see, e.g., \cite[p. 221, p. 237]{Zhang11} 
and \cite{Zha12}. 
On the other hand, a  map  $\Phi $  is said to be {\it $m$-copositive} if 
for every $[A_{i,j}]_{i,j=1}^m\in \mathbb{M}_m(\mathbb{M}_n)$, 
\begin{equation}  \label{eq2}
[A_{i,j}]_{i,j=1}^m \ge 0 \Rightarrow [\Phi (A_{j,i})]_{i,j=1}^m\ge 0,  
\end{equation}
and $\Phi$ is said to be {\it completely copositive} 
if (\ref{eq2}) holds for every positive integer $m\ge 1$. 
Furthermore, 
a map $\Phi$ is called {\it a completely PPT} if it is both completely positive and completely copositive. 
A comprehensive survey on completely positive maps can be found in 
\cite[Chapter 3]{Bh07}.

Before starting our proof of Theorem \ref{thm22}, 
 we first introduce the following useful Lemma \ref{lem21}, 
which is not only  the main result in \cite[Theorem 1.1]{Lin14}, 
but also plays an important role in our proof.  
We here provide an alternative proof for completeness; 
see \cite{Li20laa} for more potential applications 
and \cite{FLTmia} for the relation 
with singular value inequality.

\begin{lemma}  \cite{Lin14} \label{lem21}
The map $\Phi (X)= X + (\tr X) I$ is completely PPT.
\end{lemma}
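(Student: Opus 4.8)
The plan is to establish complete positivity and complete copositivity of $\Phi(X) = X + (\tr X)I$ separately. For complete positivity, I need to show that whenever $[A_{i,j}]_{i,j=1}^m \ge 0$ in $\mathbb{M}_m(\mathbb{M}_n)$, the matrix $[\Phi(A_{i,j})]_{i,j=1}^m = [A_{i,j}]_{i,j=1}^m + [(\tr A_{i,j})I_n]_{i,j=1}^m$ is positive semidefinite. The first summand is $A$ itself, so it suffices to check that $[(\tr A_{i,j})I_n]_{i,j=1}^m \ge 0$. But this matrix is exactly $(\tr_2 A)\otimes I_n$, and since the trace map is completely positive (equivalently, since $\tr_2$ sends PSD block matrices to PSD matrices, as recalled in the introduction), $\tr_2 A \ge 0$, hence $(\tr_2 A)\otimes I_n \ge 0$. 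Adding two PSD matrices gives a PSD matrix, so $\Phi$ is completely positive.

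For complete copositivity, I need $[\Phi(A_{j,i})]_{i,j=1}^m \ge 0$, i.e. $[A_{j,i}]_{i,j=1}^m + [(\tr A_{j,i})I_n]_{i,j=1}^m \ge 0$. The first summand is the partial transpose $A^\tau$, which need not be PSD, so this direction is genuinely the substantive one. The key observation is that $\tr A_{j,i} = \tr A_{i,j}^*= \o{\tr A_{i,j}}$ is irrelevant to the argument in a cleaner way: write $T = \tr_2 A = [\tr A_{i,j}]_{i,j=1}^m \in \mathbb{M}_m$, which is PSD. Then $[(\tr A_{j,i})I_n]_{i,j=1}^m = T^{\mathsf{T}} \otimes I_n = \o{T}\otimes I_n \ge 0$ as well (the transpose of a PSD matrix is PSD). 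So the claim reduces to $A^\tau + \o{T}\otimes I_n \ge 0$, equivalently (replacing $A$ by $A^\tau$, whose second partial trace is $\o{T}$) to the statement $A + (\tr_2 A)\otimes I_n \ge 0$ for \emph{arbitrary} PSD $A$ — wait, that is just the completely positive case again. Let me instead argue directly: for any vector, or more robustly, use that $A^\tau \ge -(\tr_2 A^\tau)\otimes I_n$ would follow from the completely positive case applied to $A^\tau$ only if $A^\tau \ge 0$. The honest route is to test against an arbitrary $x \in \mathbb{C}^{mn}$, writing $x = \sum_k e_k \otimes x_k$ with $x_k \in \mathbb{C}^n$; then $x^*(A^\tau)x = \sum_{i,j} x_i^* A_{j,i} x_j$ and $x^*(\o T \otimes I_n)x = \sum_{i,j}\o{\tr A_{i,j}}\, x_i^* x_j$. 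One bounds $|x_i^* A_{j,i} x_j|$ using the PSD-ness of $A$ (via a $2\times 2$ block Cauchy–Schwarz: $\left[\begin{smallmatrix} A_{i,i} & A_{i,j} \\ A_{j,i} & A_{j,j}\end{smallmatrix}\right]\ge 0$) and $\tr A_{i,i} \ge 0$, then sums.

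The cleanest packaging, which I will adopt, is the following reduction to the diagonal. Since the bilinear form $x^*(A^\tau + \o T \otimes I_n)x$ is real (both matrices are Hermitian), and using $|x_i^* A_{j,i} x_j + x_j^* A_{i,j} x_i| \le x_i^* A_{i,i} x_i \cdot \|x_j\|^2/\|x_i\|^2 + \ldots$ — more precisely, from $\left[\begin{smallmatrix} A_{i,i} & A_{i,j} \\ A_{j,i} & A_{j,j}\end{smallmatrix}\right]\ge 0$ one gets $|x_j^* A_{i,j} x_i| \le \tfrac12(\|x_i\|^2 \tr A_{j,j} + \|x_j\|^2 \tr A_{i,i})$ after also using $A_{i,i} \le (\tr A_{i,i}) I_n$ (valid since $A_{i,i}\ge 0$ and all its eigenvalues are at most its trace). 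Summing over all $i,j$, the off-diagonal contributions of $A^\tau$ are dominated by $\sum_{i,j}\|x_i\|^2 \tr A_{j,j} = (\sum_j \tr A_{j,j})\sum_i \|x_i\|^2$, which matches what one needs; the diagonal terms of $A^\tau$ coincide with those of $A$ and cause no trouble.

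\textbf{Main obstacle.} The delicate point is not the completely positive direction — that is essentially immediate from $\tr_2 A \ge 0$ — but verifying complete \emph{copositivity}, where $A^\tau$ can fail to be PSD. The crux is to control each off-diagonal block norm $\|x_j^* A_{i,j} x_i\|$ purely in terms of the \emph{traces} $\tr A_{i,i}$ (not $A_{i,i}$ itself), so that the bound assembles into $(\tr_2 A)\otimes I_n$ after summation; the two ingredients making this work are the $2\times2$ block positivity of $A$ and the elementary bound $X \le (\tr X) I$ for PSD $X$. I expect that once these are in place the summation is routine, so the write-up should foreground exactly that pair of estimates.
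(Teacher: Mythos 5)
Your completely positive half is correct and is exactly the paper's argument: $[\Phi(A_{i,j})]_{i,j=1}^m=A+(\tr_2A)\otimes I_n\ge 0$. The gap is in the copositivity half, which you yourself identify as the substantive one. What must be shown is $A^{\tau}+[\tr A_{j,i}]_{i,j=1}^m\otimes I_n=A^{\tau}+(\tr_2A^{\tau})\otimes I_n\ge 0$ (this is precisely one half of Choi's inequality (2) quoted in the introduction, so it is a genuinely sharp statement). Your estimates $|x_i^*A_{j,i}x_j|\le\frac12\bigl(\tr A_{j,j}\|x_i\|^2+\tr A_{i,i}\|x_j\|^2\bigr)$ are fine, but after summing you compare the off-diagonal contribution of $A^{\tau}$ with $(\tr A)\|x\|^2=\sum_{i,j}\tr A_{j,j}\|x_i\|^2$ and declare that this "matches what one needs." It does not: the quadratic form of the second summand is $x^*\bigl((\tr_2A)^{\mathsf T}\otimes I_n\bigr)x=\sum_{i,j}\overline{\tr A_{i,j}}\,x_i^*x_j$, whose off-diagonal terms can be negative and can drive it to $0$ (e.g. $\tr_2A=J_m$ with alternating $x_i$); since $(\tr_2A)^{\mathsf T}\otimes I_n\le(\tr A)I_{mn}$, what your computation actually establishes is only the strictly weaker inequality $A^{\tau}+(\tr A)I_{mn}\ge 0$, not complete copositivity. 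Nor can the scheme be repaired by also bounding the off-diagonal entries of $(\tr_2A)^{\mathsf T}\otimes I_n$ in absolute value against the diagonal: with $n=1$, $A=J_m$ and all $x_i=1$, the total off-diagonal absolute contribution is $2m(m-1)$ while the diagonal contribution is $2m$, so any argument that discards the signs of the cross terms fails for every $m\ge3$. The inequality has essentially no slack, so a structural argument, not a triangle-inequality bound, is required.

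For comparison, the paper disposes of copositivity by Choi's criterion: complete copositivity of $\Phi$ is equivalent to positivity of the Choi-type matrix $[\Phi(E_{j,i})]_{i,j=1}^m=[E_{j,i}+\delta_{i,j}I_n]_{i,j=1}^m$ for every $m$, and this matrix is symmetric, has nonnegative diagonal and is (weakly) diagonally dominant, hence positive semidefinite. So keep your first half verbatim, but replace the copositivity argument either by this Choi-matrix computation or by some other proof of $A^{\tau}+(\tr_2A^{\tau})\otimes I_n\ge 0$ that retains the cancellation structure; Cauchy--Schwarz on $2\times2$ blocks together with $X\le(\tr X)I$ alone cannot do it.
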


\begin{proof}
We use the Choi's criterion \cite{Choi72} to give a short proof.  
This criterion  is now becoming  a standard tool for completely PPT map in 
quantum information theory. 
It suffices to prove that for every positive integer $m$, 
\[ [\Phi (E_{j,i})]_{i,j=1}^m \ge 0, \]
where $E_{j,i}\in \mathbb{M}_n$ stands for the unit matrix, that is, the matrix with 
$1$ in the $(j,i)$-th entry and $0$ elsewhere.  
Note that $[\Phi (E_{j,i})]_{i,j=1}^m$ is symmetric and row diagonally dominant with nonnegative 
diagonal entries. Then $[\Phi (E_{j,i})]_{i,j=1}^m$  is positive semidefinite for each $m$. 
So $[\Phi (A_{j,i})]_{i,j=1}^m$ is positive semidefinite.
On the other hand, let $A=[A_{i,j}]_{i,j=1}^m$ be positive semidefinite. 
Since $[\tr A_{i,j}]_{i,j=1}^m$ is positive semidefinite \cite[p. 237]{Zhang11} and 
\[ [\Phi (A_{i,j})]_{i,j=1}^m = [\tr A_{i,j}]_{i,j=1}^m \otimes I_n + A,  \] 
then  $[\Phi (A_{i,j})]_{i,j=1}^m$ is also positive semidefinite. This completes the proof.
\end{proof}

Now, we are ready to present the main result. 
Our result could be viewed as a 
generalization and complement of both (\ref{eqando}) and (\ref{eqchoi}). 

  \begin{theorem}\label{thm22} 
Let $A=[A_{i,j}]_{i,j=1}^m\in \mathbb{M}_m(\mathbb{M}_n)$ be positive semidefinite. Then
\[  
(\tr A)I_{mn}  - (\tr_2 A) \otimes I_n
\ge \pm  \bigl( I_m\otimes (\tr_1 A)  -A\bigr).
\] 
 \end{theorem}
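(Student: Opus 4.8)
The plan is to split the two-sided inequality into the claim
\[
(\tr A)I_{mn} - (\tr_2 A)\otimes I_n + \bigl(I_m\otimes(\tr_1 A) - A\bigr) \ge 0
\]
together with its counterpart obtained by flipping the sign of the last parenthesis. The first of these is exactly Ando's inequality \eqref{eqando}, so nothing new is needed there. All the work goes into the second inequality, namely
\[
(\tr A)I_{mn} - (\tr_2 A)\otimes I_n - I_m\otimes(\tr_1 A) + A \ge 0 ,
\]
and the idea is to produce it by applying a completely PPT map to $A$ and invoking the copositive half of Lemma \ref{lem21}.

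First I would rewrite each term of the target in the block-entry notation $A=[A_{i,j}]_{i,j=1}^m$. Using $\tr A = \sum_k \tr A_{k,k}$, $\tr_1 A = \sum_k A_{k,k}$, and $\tr_2 A = [\tr A_{i,j}]_{i,j}$, the matrix above becomes a block matrix whose $(i,j)$ block is
\[
\delta_{ij}\Bigl(\sum_k \tr A_{k,k}\Bigr)I_n - \delta_{ij}(\tr A_{i,j})I_n - \sum_k A_{k,k}\,\delta_{ij}\cdot\tfrac1m\ \text{(careful bookkeeping here)} + A_{i,j},
\]
so that, after collecting terms, the $(i,j)$ block is $A_{i,j} + \delta_{ij}\bigl[(\tr A)I_n - (\tr A_{i,i})I_n - \tr_1 A\bigr]$; the key point is that off the diagonal it is just $A_{i,j}$, and on the diagonal one has added a correction of the form $(\text{scalar})I_n$ minus $\tr_1 A$. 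The aim is to recognize this block matrix as $[\Psi(A_{j,i})]_{i,j}$ (note the swapped indices, so that copositivity of $\Psi$ applies) for a suitable completely positive map $\Psi$ built from the map $\Phi(X)=X+(\tr X)I$ of Lemma \ref{lem21} together with the partial-trace maps, which are themselves completely positive. Concretely I expect $\Psi$ to be an affine combination such as $\Psi(X) = X + (\tr X)I_n - (\text{something involving } \tr_1 A)$, where the $\tr_1 A$ piece must be absorbed by writing $\tr_1 A$ itself as a partial trace applied to $A$ inside the same Choi-matrix computation.

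The cleanest route is probably to bypass a single master map and instead argue as follows: since $\Phi(X)=X+(\tr X)I_n$ is completely copositive (Lemma \ref{lem21}), the block matrix $[\Phi(A_{j,i})]_{i,j=1}^m = A^{\tau} + (\tr_2 A)\otimes I_n$ wait—that is the wrong direction; rather $[\Phi(A_{j,i})]_{i,j} = [\tr A_{j,i}]_{i,j}\otimes I_n + [A_{j,i}]_{i,j}$, and one must track exactly which transposes appear. I would then combine this with the elementary facts $(\tr A)I_{mn} \ge (\tr_2 A)\otimes I_n$ (from positivity of $\tr_2 A$, as noted in the introduction) and the analogous statement for $\tr_1 A$, and with Choi's inequality \eqref{eqchoi} applied to $A$ rather than $A^{\tau}$ in the cases where $A^{\tau}\ge 0$ is not available. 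The decomposition I am aiming for is
\[
(\tr A)I_{mn} - (\tr_2 A)\otimes I_n + A - I_m\otimes(\tr_1 A) = \bigl[\Phi(A_{j,i})\bigr]_{i,j=1}^m + \Bigl((\tr A)I_{mn} - \text{(two nonnegative pieces)}\Bigr),
\]
so that the right side is manifestly positive semidefinite.

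The main obstacle I anticipate is the bookkeeping that matches the diagonal correction $(\tr A)I_n - (\tr A_{i,i})I_n - \tr_1 A$ against what a completely PPT map actually delivers: the term $-\,I_m\otimes(\tr_1 A)$ is a \emph{global} object (the same $n\times n$ matrix in every diagonal block), whereas $\Phi$ acts block-by-block, so one needs the identity $\sum_i A_{i,i} = \tr_1 A$ to reintroduce it, and the sign is against us. Getting a genuinely positive semidefinite certificate may require, instead of Lemma \ref{lem21} alone, the observation that $\Phi_c(X) := (\tr X)I_n - X$ is also completely copositive whenever it is applied to matrices whose partial transpose is controlled, or more simply splitting $A = \tfrac12(A + A^{\tau}) + \tfrac12(A - A^{\tau})$ is not available since $A^\tau \not\ge 0$ in general. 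I therefore expect the honest proof to apply the completely PPT map $X\mapsto (\tr X)I_n + X$ to the $2\times 2$-block rearrangement of $A$, or to pass through the $m$-positivity formulation \eqref{eq1}--\eqref{eq2} directly, checking the relevant Choi matrix $[\Psi(E_{j,i})]_{i,j}$ is diagonally dominant exactly as in the proof of Lemma \ref{lem21}; verifying that diagonal dominance for the correct $\Psi$ is where the real content lies.
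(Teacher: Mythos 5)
There is a genuine gap, and it starts at the very first step: you have the two halves of the theorem swapped. The inequality you call ``exactly Ando's,'' namely $(\tr A)I_{mn}-(\tr_2 A)\otimes I_n+\bigl(I_m\otimes(\tr_1 A)-A\bigr)\ge 0$, is \emph{not} (\ref{eqando}); it is precisely the new assertion (\ref{eqmain}) that the theorem adds, equivalent to $(\tr A)I_{mn}+I_m\otimes(\tr_1 A)\ge A+(\tr_2 A)\otimes I_n$. Ando's inequality (\ref{eqando}) is the one with the opposite sign, $(\tr A)I_{mn}+A\ge I_m\otimes(\tr_1 A)+(\tr_2 A)\otimes I_n$, which is exactly the inequality you then spend all your effort trying to re-derive. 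So, as written, the genuinely new half of the theorem is never proved (it is dismissed by a miscitation), while the half you labor over is the one you were entitled to quote.

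Even for the half you do attempt, the sketch does not close. The bookkeeping is off: $(\tr_2 A)\otimes I_n$ has $(i,j)$ block $(\tr A_{i,j})I_n$ for \emph{all} $i,j$, not only on the diagonal, so the off-diagonal blocks of your target are $A_{i,j}\mp(\tr A_{i,j})I_n$, never just $A_{i,j}$, and the $1/m$ factor is spurious; more importantly, no map $\Psi$ and no positive semidefinite certificate is ever exhibited --- you explicitly defer ``where the real content lies.'' For comparison, the paper proves the new half by localizing to $2\times 2$ principal blocks: one writes $\Gamma:=(\tr A)I_{mn}+I_m\otimes(\tr_1 A)-A-(\tr_2 A)\otimes I_n$ as the sum of compressions $\sum_{1\le p<q\le m}(I_{p,q}\otimes I_n)^*M_{p,q}(I_{p,q}\otimes I_n)$, where
\[
M_{p,q}=\begin{bmatrix}(\tr A_{q,q})I_n+A_{q,q} & -A_{p,q}-(\tr A_{p,q})I_n\\ -A_{q,p}-(\tr A_{q,p})I_n & (\tr A_{p,p})I_n+A_{p,p}\end{bmatrix},
\]
and each $M_{p,q}\ge 0$ because complete copositivity of $\Phi(X)=X+(\tr X)I$ (Lemma \ref{lem21}) applied to the positive block $\bigl[\begin{smallmatrix}A_{p,p}&A_{p,q}\\ A_{q,p}&A_{q,q}\end{smallmatrix}\bigr]$ gives $[\Phi(A_{j,i})]_{i,j}\ge 0$, of which $M_{p,q}$ is the conjugate by $\bigl[\begin{smallmatrix}0&-I_n\\ I_n&0\end{smallmatrix}\bigr]$. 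Your instinct to invoke the copositive half of Lemma \ref{lem21} with swapped indices is the right one, but it must be aimed at the new inequality, reduced to $2\times 2$ principal blocks, and combined with the antidiagonal conjugation and the sum-of-compressions decomposition; none of these steps appears in your proposal, and the global term $I_m\otimes(\tr_1 A)$ that worries you is handled automatically by that decomposition.
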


\begin{proof}
As Ando's result (\ref{eqando}), we only need to prove that 
  	\begin{equation}\label{eqmain} 
(\tr A)I_{mn} - (\tr_2 A) \otimes I_n   
\ge  A -I_m\otimes (\tr_1 A) .
  	\end{equation}  
When $m=1$, there is nothing to prove. 
We now prove the case   $m=2$. In this case, the required inequality is  
\begin{align*}  
& \begin{bmatrix}  (\tr A )I_n& 0\\ 0& (\tr A)I_n \end{bmatrix} - 
\begin{bmatrix} (\tr A_{1,1})I_n& (\tr A_{1,2})I_n\\ (\tr A_{2,1})I_n & (\tr A_{2,2})I_n \end{bmatrix} 
  \\
 &\quad \ge  \begin{bmatrix} A_{1,1}& A_{1,2}\\ A_{2,1} & A_{2,2} \end{bmatrix} - 
\begin{bmatrix} A_{1,1}+A_{2,2}& 0\\ 	0& A_{1,1}+A_{2,2}\end{bmatrix}, 
 \end{align*}
 or equivalently (note that $\tr A=\tr A_{1,1} +\tr A_{2,2}$),	
\begin{equation} \label{eqbs}
  M:=\begin{bmatrix}
  		(\tr A_{2,2})I_n+ A_{2,2}&   -A_{1,2}-(\tr A_{1,2})I_n\\  
-A_{2,1}-(\tr A_{2,1})I_n& 	(\tr A_{1,1})I_n+ A_{1,1}
  	\end{bmatrix}\ge 0.  
\end{equation}
By Lemma \ref{lem21}, we get 
 			\begin{eqnarray*} \begin{bmatrix}
  					(\tr A_{1,1})I_n+ A_{1,1}& (\tr A_{2,1})I_n+ A_{2,1}\\ 	
(\tr A_{1,2})I_n+ A_{1,2}& 	(\tr A_{2,2})I_n+ A_{2,2}
  				\end{bmatrix}\ge 0,  \end{eqnarray*} 
and so 
  				\begin{eqnarray*} 
M=\begin{bmatrix}0 & -I_n\\  I_n& 0\end{bmatrix} 
\begin{bmatrix} (\tr A_{1,1})I_n+ A_{1,1}& (\tr A_{2,1})I_n+ A_{2,1}\\ 
(\tr A_{1,2})I_n+ A_{1,2}& (\tr A_{2,2})I_n+ A_{2,2} \end{bmatrix} 
\begin{bmatrix}0 &  I_n\\  -I_n& 0\end{bmatrix} \ge 0,  \end{eqnarray*} 
which confirms the desired  (\ref{eqbs}).

Next, we turn to the general case. 
Our treatment in this case has its root in \cite{Ando14}. 
By definition, setting  
  		\begin{align*}  
\Gamma &:=(\tr A)I_{mn}+ I_m \otimes(\tr_1 A)-A-(\tr_2 A) \otimes I_n	\\
&= \left(\tr \sum_{i=1}^{m}A_{i,i}\right) I_{mn}
+I_m\otimes \left(\sum_{j=1}^m A_{j,j}\right)-A-\bigl( [\tr A_{j,k}]_{j,k=1}^m\bigr) \otimes I_n \\
&= \left[ \delta_{j,k}\Bigl( \sum_{i=1}^m \tr A_{ii}\Bigr)I_n + 
\delta_{j,k} \Bigl(\sum_{i=1}^m A_{i,i} \Bigr) -A_{j,k}- (\tr A_{j,k})I_n  \right]_{j,k=1}^m. 
\end{align*}
For each pair $(p,q)$ with $1\le p<q \le m$, 
we define a $2\times m$ matrix $I_{p,q}$ as 
\[ 
I_{p,q}:=[\delta_{j,1}\delta_{k,p} +\delta_{j,2}\delta_{k,q}]_{j,k=1}^{2,m} 
= \left[\begin{array}{ccccccccccc}
0 & \cdots & 0 & \overset{ \text{$p$-th} }{1} & 0 & 
\cdots & 0& 0 &0 &\cdots & 0 \\
0 & \cdots & 0 & 0 & 0 & \cdots & 0& 
\overset{ \text{$q$-th} }{1}
&0 &\cdots & 0 
\end{array} \right].   \]
Upon a direct computation, it follows that 
\[ \Gamma = \sum_{1\le p<q \le m} (I_{p,q}\otimes I_n)^* M_{p,q} (I_{p,q}\otimes I_n), \]
where $M_{p,q}\in \mathbb{M}_2(\mathbb{M}_n)$ are defined as 
\[  M_{p,q}:=\begin{bmatrix}
(\tr A_{q,q})I_n +A_{q,q} & -A_{p,q}-(\tr A_{p,q})I_n \\
-A_{q,p} -(\tr A_{q,p})I_n & (\tr A_{p,p})I_n +A_{p,p} \end{bmatrix}.\]
It is easy to see from the case $m=2$ that the positivity of 
$\begin{bmatrix} A_{p,p} & A_{p,q} \\  A_{q,p} & A_{q,q}  \end{bmatrix}$ yields 
$M_{p,q}\ge 0$.  Hence, we get $\Gamma \ge 0$. 
This completes the proof. 
\end{proof}

Over the years, $2\times 2$ block positive semidefinite matrices are well studied, 
such a partition yields various elegant matrix inequalities; 
see \cite{Ando16,GLRT18,KL17,Lin15} for recent results. 
Next, we will give a partial traces inequality in the form of  $2\times 2$ block matrix. 

\begin{corollary}
Let $A=[A_{i,j}]_{i,j=1}^m\in \mathbb{M}_m(\mathbb{M}_n)$ be positive semidefinite. Then 
\begin{equation} \label{eq77}
\begin{bmatrix}
(\tr A)I_{mn} & A \\ 
A &  (\tr A)I_{mn}  \end{bmatrix} 
\ge 
\begin{bmatrix}
(\tr_2 A)\otimes I_n & I_m\otimes (\tr_1 A) \\ I_m\otimes (\tr_1 A) & (\tr_2 A)\otimes I_n   
\end{bmatrix}. 
\end{equation}
\end{corollary}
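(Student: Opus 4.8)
The plan is to deduce the $2\times 2$ block inequality (\ref{eq77}) directly from Theorem \ref{thm22} by exhibiting it as a congruence-transformed sum. First I would observe that a $2\times 2$ block matrix of the form $\begin{bmatrix} P & R \\ R & P\end{bmatrix}$ with $P,R$ Hermitian is positive semidefinite if and only if $P \ge \pm R$, i.e. if and only if $P + R \ge 0$ and $P - R \ge 0$; this is the standard fact obtained by conjugating with $\frac{1}{\sqrt2}\begin{bmatrix} I & I \\ I & -I\end{bmatrix}$, which block-diagonalizes the matrix to $(P+R)\oplus (P-R)$.

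Applying this with $P = (\tr A)I_{mn} - (\tr_2 A)\otimes I_n$ and $R = I_m\otimes(\tr_1 A) - A$, the inequality (\ref{eq77}) is equivalent to the pair of statements $P + R \ge 0$ and $P - R \ge 0$, which is precisely
\[
(\tr A)I_{mn} - (\tr_2 A)\otimes I_n \ge \pm\bigl(I_m\otimes(\tr_1 A) - A\bigr).
\]
That is exactly the conclusion of Theorem \ref{thm22}. So the corollary follows immediately once the equivalence between the block form and the $\pm$ form is recorded.

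The only genuinely new content is the bookkeeping needed to state (\ref{eq77}) as the block matrix of $P$ against $R$: one must check that the off-diagonal block of the claimed inequality is $A$ on the left and $I_m\otimes(\tr_1 A)$ on the right, so that the difference of the two block matrices in (\ref{eq77}) is $\begin{bmatrix} P & -R \\ -R & P\end{bmatrix}$ (note $A - I_m\otimes(\tr_1 A) = -R$), and then conjugating by $\frac{1}{\sqrt2}\begin{bmatrix} I_{mn} & I_{mn} \\ I_{mn} & -I_{mn}\end{bmatrix}$ turns this into $(P - R)\oplus(P + R)$. Since there is essentially no obstacle here, I would also mention the alternative route: write the left side minus the right side of (\ref{eq77}) directly as $U^* \bigl[(P-R)\oplus (P+R)\bigr] U$ for the unitary $U$ above, and invoke Theorem \ref{thm22} for the positivity of both summands. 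The main (and only) point requiring care is getting the signs right in identifying $A - I_m\otimes(\tr_1 A)$ with $-R$ rather than $+R$, after which the two-sided bound $P \ge \pm R$ handles both diagonal summands at once.
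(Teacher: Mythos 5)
Your argument is correct and is essentially the paper's proof: both rest on the congruence by $\frac{1}{\sqrt{2}}\bigl[\begin{smallmatrix} I & I \\ I & -I \end{smallmatrix}\bigr]$, which reduces (\ref{eq77}) to the two inequalities $(\tr A)I_{mn}-(\tr_2 A)\otimes I_n \ge \pm\bigl(I_m\otimes(\tr_1 A)-A\bigr)$ supplied by Theorem \ref{thm22}. The only cosmetic difference is that the paper first forms the positive block matrix with $I_m\otimes(\tr_1 A)-A$ off-diagonal and then flips the sign by conjugating with $\bigl[\begin{smallmatrix} I & 0 \\ 0 & -I \end{smallmatrix}\bigr]$, whereas you diagonalize the difference of the two sides directly, handling the sign in the bookkeeping.
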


\begin{proof}
Note that 
\[ \begin{bmatrix} I & I \\ I & -I  \end{bmatrix} 
\begin{bmatrix} X & Y \\ Y & X  \end{bmatrix} \begin{bmatrix} I & I \\ I & -I  \end{bmatrix} 
=\begin{bmatrix} 2(X+Y) & 0 \\ 0 & 2(X-Y)  \end{bmatrix} \]
for any $X$ and $Y$ with same size. By this identity and Theorem \ref{thm22}, it follows that 
\[ \begin{bmatrix}
(\tr A)I_{mn} -(\tr_2 A)\otimes I_n & I_m\otimes (\tr_1 A)-A \\
I_m\otimes (\tr_1 A)-A &  (\tr A)I_{mn} -(\tr_2 A)\otimes I_n \end{bmatrix}\ge 0.  \]
By left and right-multiplying $\Bigl[ \!\begin{smallmatrix} I &0 \\ 0& -I
\end{smallmatrix}\! \Bigr]$, the disired result (\ref{eq77}) immediately holds. 
\end{proof}

We next provide an analogous result of Theorem \ref{thm22} under the PPT condition.  

  \begin{proposition}\label{prop23} 
Let $A=[A_{i,j}]_{i,j=1}^m\in \mathbb{M}_m(\mathbb{M}_n)$ be PPT. Then
\[ 
(\tr A)I_{mn}  + (\tr_2 A) \otimes I_n
\ge  I_m\otimes (\tr_1 A)  +A.
\] 
 \end{proposition}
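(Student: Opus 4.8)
The plan is to mimic the structure of the proof of Theorem \ref{thm22}, but now exploiting both positivity and copositivity of $A$ so that \emph{all} the off-diagonal $\pm$ ambiguity disappears and the two partial-trace terms add rather than subtract. First I would dispose of the trivial case $m=1$, and then treat $m=2$ as the base case. Writing out the claimed inequality for $m=2$ and using $\tr A=\tr A_{1,1}+\tr A_{2,2}$, the statement reduces to the positive semidefiniteness of the $2\times 2$ block matrix
\[
N:=\begin{bmatrix}
(\tr A_{2,2})I_n + A_{2,2} & (\tr A_{1,2})I_n + A_{1,2} \\
(\tr A_{2,1})I_n + A_{2,1} & (\tr A_{1,1})I_n + A_{1,1}
\end{bmatrix},
\]
i.e.\ exactly the matrix $[\Phi(A_{i,j})]$ from Lemma \ref{lem21} after the harmless swap $1\leftrightarrow 2$ of block indices (conjugate by $\bigl[\begin{smallmatrix}0&I_n\\ I_n&0\end{smallmatrix}\bigr]$). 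By Lemma \ref{lem21}, $\Phi(X)=X+(\tr X)I$ is completely positive, so $[\Phi(A_{i,j})]_{i,j=1}^2\ge 0$ whenever $A\ge 0$; conjugating by the permutation gives $N\ge 0$. Note that here I do \emph{not} need the copositive half of Lemma \ref{lem21} nor the PPT hypothesis for the $m=2$ step --- positivity alone suffices, because the signs of $A_{1,2}$ and $A_{2,1}$ in $N$ match those coming from $\Phi$.

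For general $m$, I would reuse Ando's averaging trick from the proof of Theorem \ref{thm22} verbatim. Set
\[
\Gamma' := (\tr A)I_{mn} + (\tr_2 A)\otimes I_n - I_m\otimes(\tr_1 A) - A,
\]
expand it blockwise exactly as in that proof, and observe that
\[
\Gamma' = \sum_{1\le p<q\le m} (I_{p,q}\otimes I_n)^* N_{p,q} (I_{p,q}\otimes I_n),
\]
where now
\[
N_{p,q} := \begin{bmatrix}
(\tr A_{q,q})I_n + A_{q,q} & (\tr A_{p,q})I_n + A_{p,q} \\
(\tr A_{q,p})I_n + A_{q,p} & (\tr A_{p,p})I_n + A_{p,p}
\end{bmatrix}.
\]
Each $N_{p,q}$ is the $m=2$ matrix built from the compression $\bigl[\begin{smallmatrix}A_{p,p}&A_{p,q}\\ A_{q,p}&A_{q,q}\end{smallmatrix}\bigr]$, which is a principal submatrix of $A$ and hence positive semidefinite; so by the base case $N_{p,q}\ge 0$, whence $\Gamma'\ge 0$, which is the claim. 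The only bookkeeping to be careful about is checking that the combinatorial identity for $\Gamma'$ is genuinely the sign-flipped analogue of the one for $\Gamma$ --- the diagonal blocks $\delta_{j,k}(\sum_i\tr A_{ii})I_n + \delta_{j,k}(\sum_i A_{ii}) - A_{j,k} - (\tr A_{j,k})I_n$ are unchanged, while the two partial-trace contributions that were subtracted in $\Gamma$ are now added, and one verifies these reassemble correctly under the $I_{p,q}$ compressions.

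I expect no serious obstacle here: the argument is essentially ``the same proof with different signs.'' The one subtle point worth flagging is \emph{where the PPT hypothesis actually enters}. Tracing through the above, positivity of $A$ alone already gives $N_{p,q}\ge 0$ and hence the inequality --- so one should double-check whether the PPT assumption in Proposition \ref{prop23} is truly needed, or whether it is stated only for symmetry with the rest of the paper (or needed to make the left-hand side $(\tr A)I_{mn}+(\tr_2 A)\otimes I_n - I_m\otimes(\tr_1 A) - A$ the ``right'' complement of Choi's inequality). If the intended proof instead routes through $A^{\tau}$ --- e.g.\ applying Theorem \ref{thm22} or Lemma \ref{lem21} to $A^{\tau}\ge 0$ and then translating back, using $\tr_1(A^{\tau})=\tr_1 A$ and $\tr_2(A^{\tau})=(\tr_2 A)^T$ together with $U(\tr_2 A)U^* = (\tr_2 A)^T$ for a suitable permutation $U$ --- then the copositivity of $A$ is exactly what licenses that step, and the main (minor) obstacle becomes verifying that the partial traces transform as claimed under $\tau$ and that conjugating $(\tr_2 A)\otimes I_n$ by $U\otimes I_n$ fixes it. Either way the proof is short; I would present the direct version above and remark on the role of the PPT condition.
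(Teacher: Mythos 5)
Your primary (direct) argument rests on an algebraic identity that is false, and it fails already at the base case. For $m=2$ the matrix to be shown positive semidefinite is $\Gamma'=(\tr A)I_{2n}+(\tr_2 A)\otimes I_n-I_2\otimes(\tr_1 A)-A$, whose $(1,1)$ block is $(2\tr A_{1,1}+\tr A_{2,2})I_n-2A_{1,1}-A_{2,2}$ and whose $(1,2)$ block is $(\tr A_{1,2})I_n-A_{1,2}$; this is not your matrix $N$, whose corresponding blocks are $(\tr A_{2,2})I_n+A_{2,2}$ and $(\tr A_{1,2})I_n+A_{1,2}$. (Your $N\ge 0$ is true, but it is just Lemma \ref{lem21} conjugated by a block swap, not the inequality to be proved.) The same mismatch kills the general step: for $j\ne k$ the $(j,k)$ block of $\Gamma'$ is $(\tr A_{j,k})I_n-A_{j,k}$, whereas $\sum_{p<q}(I_{p,q}\otimes I_n)^* N_{p,q}(I_{p,q}\otimes I_n)$ has $(\tr A_{j,k})I_n+A_{j,k}$ there, and the diagonal blocks do not agree either — flipping the signs of the terms $(\tr_2 A)\otimes I_n$ and $I_m\otimes(\tr_1 A)$ changes the diagonal blocks, contrary to your claim that they are ``unchanged.'' So the ``same proof with different signs'' is exactly the bookkeeping you deferred, and it does not check out; a genuinely different decomposition would be needed, since the diagonal blocks of $\Gamma'$ contain $-\sum_{i\ne j}A_{i,i}$, which cannot arise from summing your $N_{p,q}$ compressions.

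The paper's proof is entirely different and much shorter: it splits the claim into $(\tr A)I_{mn}\ge I_m\otimes(\tr_1 A)$, which needs only $A\ge 0$ (because $(\tr A)I_n=\sum_{i}(\tr A_{i,i})I_n\ge\sum_i A_{i,i}=\tr_1 A$), and $(\tr_2 A)\otimes I_n\ge A$, which is exactly where the PPT hypothesis enters, via Choi's inequality (\ref{eqchoi}) applied to $A^{\tau}\ge 0$; adding the two inequalities gives the proposition. Your closing suspicion that PPT is not really needed for the stated inequality is in fact correct, but not for the reason you give: the paper later proves the same inequality under mere positive semidefiniteness in Corollary \ref{coro26}, obtained from Theorem \ref{thm25}, i.e.\ by transporting Theorem \ref{thm22} through the unitary relabelling $A\mapsto\widetilde{A}$ (with $\tr_1\widetilde{A}=\tr_2 A$, $\tr_2\widetilde{A}=\tr_1 A$) — essentially the second route you only sketched as a hedge. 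To repair your write-up, drop the direct decomposition and either reproduce the paper's two-inequality argument (using PPT and (\ref{eqchoi})) or the tilde argument (using only $A\ge 0$).
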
 

\begin{proof}
The required proposition holds from the following
\[  (\tr A)I_{mn} \ge I_m \otimes (\tr_1 A)\quad \text{~and~} \quad 
(\tr_2 A)\otimes I_n \ge A.  \]
The first inequality follows by 
\[ (\tr A)I_n =\sum_{i=1}^m(\tr A_{i,i})I_n 
\ge \sum_{i=1}^m A_{i,i}=\tr_1 A, \]  
and the second one is a direct consequence of 
Choi's result (\ref{eqchoi}). 
\end{proof}

At the end of this section,  
we  will provide more partial trace inequalities (Theorem \ref{thm25}) 
by using a similar approach as in \cite[Theorem 6]{Choi17b}. 
Let us start with some notation.  
Let $A=[A_{i,j}]_{i,j=1}^m\in \mathbb{M}_m(\mathbb{M}_n)$ 
and suppose that $A_{i,j}=\bigl[ a^{i,j}_{r,s}\bigr]_{r,s=1}^n$. 
We define $\widetilde{A}\in \mathbb{M}_n(\mathbb{M}_m)$ by 
\[ \text{ $ \widetilde{A} := [B_{r,s}]_{r,s=1}^n$, 
~~~where $B_{r,s}=\bigl[ a^{i,j}_{r,s}\bigr]_{i,j=1}^m\in \mathbb{M}_m$.}   \]
Clearly, we have $\widetilde{\widetilde{A}}=A$,  
and it was shown in \cite[Theorem 7]{Choi532} that $\widetilde{A}$ is unitarily similar with $A$.  
 This implies that 
if $A$ is positive semidefinite, then so is $\widetilde{A}$;  
see, e.g., \cite{Choi17,Choi17b} for more datails.
By a direct computation, we can see that 
\begin{equation}\label{eqtr1}
 \mathrm{tr}_2 \widetilde{A} =
\left[ \mathrm{tr} \bigl[a_{r,s}^{i,j}\bigr]_{i,j=1}^m \right]_{r,s=1}^n =
\left[ \begin{matrix} \sum\limits_{i=1}^m  a_{r,s}^{i,i}\end{matrix}  \right]_{r,s=1}^n=
\sum\limits_{i=1}^m \left[ a_{r,s}^{i,i} \right]_{r,s=1}^n=\mathrm{tr}_1A.  
\end{equation}
Moreover, for any $X=[x_{ij}]_{i,j=1}^m\in \mathbb{M}_m$ and $Y=[y_{rs}]_{r,s=1}^n\in \mathbb{M}_n$, by definition,  
\[ X\otimes Y=[x_{ij}Y]_{i,j=1}^m =\left[ [x_{ij}y_{rs}]_{r,s=1}^n \right]_{i,j=1}^m. \]
Then, it follows that  
\begin{equation} \label{eqtr2}
 \widetilde{X\otimes Y}=\left[ [x_{ij}y_{rs}]_{i,j=1}^m \right]_{r,s=1}^n
=\left[ y_{rs}X \right]_{r,s=1}^n=Y\otimes X.  
\end{equation}

  \begin{theorem}\label{thm25} 
Let $A=[A_{i,j}]_{i,j=1}^m\in \mathbb{M}_m(\mathbb{M}_n)$ be positive semidefinite. Then
\[   
(\tr A )I_{nm}  - (\tr_1 A) \otimes I_m
\ge \pm  \bigl( I_n\otimes (\tr_2 A)  -\widetilde{A}\bigr), \] 
and 
\[ 
(\tr A )I_{nm}  + (\tr_1 A) \otimes I_m
\ge  I_n\otimes (\tr_2 A)  +\widetilde{A}.
\] 
 \end{theorem}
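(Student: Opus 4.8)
The plan is to deduce Theorem \ref{thm25} from Theorem \ref{thm22} by passing to the ``tilde'' picture, exploiting the fact that $\widetilde{A}$ is unitarily similar to $A$ together with the two bookkeeping identities \eqref{eqtr1} and \eqref{eqtr2} already established. Concretely, since $A\ge 0$ implies $\widetilde{A}\ge 0$, I would apply Theorem \ref{thm22} to the block matrix $\widetilde{A}\in \mathbb{M}_n(\mathbb{M}_m)$ (so the roles of $m$ and $n$ are swapped), which gives
\[
(\tr \widetilde{A})I_{nm} - (\tr_2 \widetilde{A})\otimes I_m \ge \pm\bigl( I_n\otimes (\tr_1 \widetilde{A}) - \widetilde{A}\bigr).
\]
Now I translate each term back using the established dictionary: $\tr \widetilde{A} = \tr A$ (unitary similarity, or directly from trace-preservation); $\tr_2 \widetilde{A} = \tr_1 A$ by \eqref{eqtr1}; and $\tr_1 \widetilde{A} = \tr_2 \widetilde{\widetilde{A}} = \tr_2 A$ by applying \eqref{eqtr1} to $\widetilde{A}$ and using $\widetilde{\widetilde{A}}=A$. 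Substituting these three identities turns the displayed inequality into exactly the first claimed inequality of Theorem \ref{thm25}.

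For the second inequality, I would mimic the proof of Proposition \ref{prop23} rather than invoke a PPT hypothesis, since here we only assume $A\ge 0$. The point is that the ``$+$'' version splits into two genuinely valid pieces: first, $(\tr A)I_{nm} \ge (\tr_1 A)\otimes I_m$, which follows because $\tr(\tr_1 A) = \tr A$ and hence $(\tr A)I_m - \tr_1 A \ge 0$ (positivity of $\tr_1 A$ and the scalar bound $(\tr X)I \ge X$ for $X\ge 0$ in $\mathbb{M}_m$ — note $\tr_1 A$ is $n$-square, so more carefully one writes $(\tr A)I_{nm} - (\tr_1 A)\otimes I_m = \bigl((\tr A)I_n - \tr_1 A\bigr)\otimes I_m \ge 0$); second, $I_n\otimes (\tr_2 A) \ge \widetilde{A}$, which is Choi's inequality $I_m\otimes \tr_1 B \ge B$ applied to $B=\widetilde{A}$ in $\mathbb{M}_n(\mathbb{M}_m)$, combined with $\tr_1 \widetilde{A} = \tr_2 A$ as computed above. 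Adding the two gives the second inequality. Alternatively, and perhaps more cleanly, the second inequality is itself a consequence of Ando's inequality \eqref{eqando} applied to $\widetilde{A}$ and translated via the same dictionary; I would present whichever derivation is shortest.

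The main obstacle — really the only subtle point — is keeping the index conventions straight: Theorem \ref{thm22} is stated for $A\in\mathbb{M}_m(\mathbb{M}_n)$ with $\tr_1 A\in\mathbb{M}_n$ and $\tr_2 A\in\mathbb{M}_m$, and when we feed in $\widetilde{A}\in\mathbb{M}_n(\mathbb{M}_m)$ every occurrence of ``$m$'' becomes ``$n$'' and vice versa, and $\tr_1,\tr_2$ get swapped accordingly. One must check that the tensor factors land on the correct side (e.g.\ $(\tr_2\widetilde{A})\otimes I_m = (\tr_1 A)\otimes I_m$, with $I_m$ genuinely on the right), which is exactly the content that \eqref{eqtr1} and \eqref{eqtr2} were set up to handle. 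Once the translation table is written out explicitly, both inequalities drop out with no further computation.
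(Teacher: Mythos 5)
Your derivation of the first inequality is exactly the paper's: apply Theorem \ref{thm22} to $\widetilde{A}\in\mathbb{M}_n(\mathbb{M}_m)$ and translate via $\tr\widetilde{A}=\tr A$, $\tr_2\widetilde{A}=\tr_1 A$, $\tr_1\widetilde{A}=\tr_2 A$ from \eqref{eqtr1}. The gap is in your treatment of the second inequality. Your main route splits it into $(\tr A)I_{nm}\ge(\tr_1 A)\otimes I_m$ and $I_n\otimes(\tr_2 A)\ge\widetilde{A}$, but the second piece is false for a general positive semidefinite $A$: Choi's inequality \eqref{eqchoi} gives $I\otimes(\tr_1 B)\ge B$ only for $B=A^{\tau}$ with $A\ge 0$ (i.e.\ when the partial transpose of $B$ is positive semidefinite), and the paper explicitly recalls that a PPT-type hypothesis is needed to ensure $I_m\otimes(\tr_1 A)\ge A$; positivity of $\widetilde{A}$ alone gives no such information. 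Concretely, for the $4\times 4$ example in the introduction one has $\tr_2 A=I_2$, so $I_n\otimes(\tr_2 A)=I_4$, while $\widetilde{A}$ is unitarily similar to $A$ and has eigenvalue $2$, so $I_n\otimes(\tr_2 A)\ge\widetilde{A}$ fails (even though the second inequality of Theorem \ref{thm25} does hold there, with equality). Moreover, even if both pieces were valid, adding them would yield $(\tr A)I_{nm}+I_n\otimes(\tr_2 A)\ge(\tr_1 A)\otimes I_m+\widetilde{A}$, which has the two middle terms on the wrong sides of the claimed inequality. Your fallback, Ando's inequality \eqref{eqando} applied to $\widetilde{A}$, also does not work: after translation it gives $(\tr A)I_{nm}-(\tr_1 A)\otimes I_m\ge I_n\otimes(\tr_2 A)-\widetilde{A}$, which is just the ``$+$'' half of the first inequality again, not the second one.

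The second inequality is precisely the assertion that the PPT hypothesis of Proposition \ref{prop23} can be dropped (in the tilde picture), so it cannot be obtained by a termwise comparison; it requires the new half \eqref{eqmain} of Theorem \ref{thm22}. The paper's argument, which your setup is one step away from, is to apply the map $X\mapsto\widetilde{X}$ (a unitary congruence, hence order-preserving) to both sides of Theorem \ref{thm22}: by \eqref{eqtr2}, $\widetilde{(\tr_2 A)\otimes I_n}=I_n\otimes(\tr_2 A)$ and $\widetilde{I_m\otimes(\tr_1 A)}=(\tr_1 A)\otimes I_m$, so \eqref{eqmain} becomes $(\tr A)I_{nm}-I_n\otimes(\tr_2 A)\ge\widetilde{A}-(\tr_1 A)\otimes I_m$, which rearranges to the stated second inequality.
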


\begin{proof}
Since $\widetilde{A} \in \mathbb{M}_n(\mathbb{M}_m)$, 
by applying Theorem \ref{thm22} to $\widetilde{A}$, we get 
\[ 
(\tr \widetilde{A} )I_{nm}  - (\tr_2 \widetilde{A}) \otimes I_m
\ge \pm  \bigl( I_n\otimes (\tr_1 \widetilde{A})  -\widetilde{A}\bigr),
\] 
Noth that $\tr \,\widetilde{A} =\tr \, A$ and combining (\ref{eqtr1}), it follows that 
\[ 
(\tr A )I_{nm}  - (\tr_1 A) \otimes I_m
\ge \pm  \bigl( I_n\otimes (\tr_2 A)  -\widetilde{A}\bigr).
\] 
On the other hand, by taking $\sim$ both sides in Theorem \ref{thm22}, we obtain  
\[ 
\widetilde{(\tr A)I_{mn} } -\widetilde{ (\tr_2 A) \otimes I_n }
\ge \pm  \bigl( \widetilde{I_m\otimes (\tr_1 A)}  -\widetilde{A}\bigr),
\] 
which together with (\ref{eqtr2}) leads to the following 
\[ (\tr A)I_{nm} - I_n \otimes (\tr_2 A) \ge 
\pm \bigl( (\tr_1 A)\otimes I_m -\widetilde{A}\bigr). \]
This completes the proof. 
\end{proof}

After finishing the first version of this paper, M. Lin 
suggested the author that an equivalent version of Theorem \ref{thm25} 
could be added as a corollary, which not only weakens the PPT condition in Proposition \ref{prop23}, 
but also can be regarded as  a complement of (\ref{eqmain}). 

\begin{corollary} \label{coro26}
Let $A=[A_{i,j}]_{i,j=1}^m\in \mathbb{M}_m(\mathbb{M}_n)$ be positive semidefinite. Then
\[ (\tr A)I_{mn} \pm (\tr_2 A)\otimes I_n \ge A \pm I_m \otimes (\tr_1 A). \]
Equivalently, it also could be written as 
\[  (\tr A)I_{mn} -A \ge \pm \bigl( I_m\otimes (\tr_1 A) -(\tr_2 A)\otimes I_n \bigr).  \]
\end{corollary}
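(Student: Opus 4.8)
The plan is to derive Corollary \ref{coro26} directly from the two inequalities in Theorem \ref{thm25} by applying the $\sim$ operation, exactly reversing the way Theorem \ref{thm25} was itself obtained from Theorem \ref{thm22}. Recall that $\sim$ is an involution ($\widetilde{\widetilde{A}}=A$) that preserves positive semidefiniteness, sends $X\otimes Y$ to $Y\otimes X$ by (\ref{eqtr2}), fixes scalar multiples of the identity, and interchanges $\tr_1$ and $\tr_2$ by (\ref{eqtr1}) in the sense that $\tr_2\widetilde{A}=\tr_1 A$ and (reading the same identity with $\widetilde{A}$ in place of $A$) $\tr_1\widetilde{A}=\tr_2 A$. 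Consequently, applying $\sim$ to a matrix inequality $P\ge Q$ between Hermitian matrices in $\mathbb{M}_n(\mathbb{M}_m)$ yields $\widetilde{P}\ge\widetilde{Q}$, since $P-Q\ge 0$ forces $\widetilde{P-Q}=\widetilde P-\widetilde Q\ge 0$.

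Concretely, I would start from the first displayed inequality of Theorem \ref{thm25},
\[ (\tr A)I_{nm} - (\tr_1 A)\otimes I_m \ge \pm\bigl(I_n\otimes(\tr_2 A)-\widetilde{A}\bigr), \]
and apply $\sim$ to both sides. Using $\widetilde{(\tr A)I_{nm}}=(\tr A)I_{mn}$, $\widetilde{(\tr_1 A)\otimes I_m}=I_m\otimes(\tr_1 A)$ from (\ref{eqtr2}), $\widetilde{I_n\otimes(\tr_2 A)}=(\tr_2 A)\otimes I_n$ again from (\ref{eqtr2}), and $\widetilde{\widetilde{A}}=A$, this transforms into
\[ (\tr A)I_{mn} - I_m\otimes(\tr_1 A) \ge \pm\bigl((\tr_2 A)\otimes I_n - A\bigr), \]
which, after moving terms across the inequality and flipping the sign in the $\pm$ (which is harmless), is precisely the first displayed form $(\tr A)I_{mn}\pm(\tr_2 A)\otimes I_n\ge A\pm I_m\otimes(\tr_1 A)$ of Corollary \ref{coro26}. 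The ``equivalently'' reformulation $(\tr A)I_{mn}-A\ge\pm\bigl(I_m\otimes(\tr_1 A)-(\tr_2 A)\otimes I_n\bigr)$ then follows by rearranging the two signed inequalities: adding $A$ and subtracting $(\tr_2 A)\otimes I_n$ from both sides of each.

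There is essentially no obstacle here; the only point requiring a moment's care is bookkeeping of which bracketing the $\sim$ map lands on after (\ref{eqtr2}), i.e.\ making sure that $X\otimes Y\mapsto Y\otimes X$ is applied in the correct direction for each tensor term, and tracking that the two signs in ``$\pm$'' remain independent so that both the $+$ and the $-$ versions come out simultaneously. One could equally well run the argument by applying Theorem \ref{thm22} to $\widetilde{A}$ and then using (\ref{eqtr1}), but routing through the already-proved Theorem \ref{thm25} is the cleanest, since both halves of Corollary \ref{coro26} fall out of the single application of $\sim$ described above together with elementary rearrangement.
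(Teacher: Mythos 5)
Your overall route---transporting Theorem \ref{thm25} back through the $\sim$ map, which is linear, involutive, positivity-preserving and swaps tensor factors by (\ref{eqtr2})---is the natural one (the paper offers no separate argument for Corollary \ref{coro26}, presenting it as an equivalent version of Theorem \ref{thm25}), and all of your individual identities $\widetilde{(\tr_1 A)\otimes I_m}=I_m\otimes(\tr_1 A)$, $\widetilde{I_n\otimes(\tr_2 A)}=(\tr_2 A)\otimes I_n$, $\widetilde{\widetilde{A}}=A$ and the transfer of $\ge$ through $\sim$ are correct. The flaw is exactly at the point you flagged as ``bookkeeping.'' Applying $\sim$ to the first display of Theorem \ref{thm25} gives, as you say,
\[ (\tr A)I_{mn} - I_m\otimes(\tr_1 A) \ \ge\ \pm\bigl((\tr_2 A)\otimes I_n - A\bigr), \]
but this pair is \emph{not} equivalent to the pair $(\tr A)I_{mn} \pm (\tr_2 A)\otimes I_n \ge A \pm I_m\otimes(\tr_1 A)$. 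Rearranging the ``$-$'' branch does yield the ``$+$'' case of the corollary, $(\tr A)I_{mn} + (\tr_2 A)\otimes I_n \ge A + I_m\otimes(\tr_1 A)$. But rearranging the ``$+$'' branch yields $(\tr A)I_{mn} - (\tr_2 A)\otimes I_n \ge I_m\otimes(\tr_1 A) - A$, which is Ando's inequality (\ref{eqando}), \emph{not} the ``$-$'' case $(\tr A)I_{mn} - (\tr_2 A)\otimes I_n \ge A - I_m\otimes(\tr_1 A)$: the terms $A$ and $I_m\otimes(\tr_1 A)$ sit on the wrong sides, and these are genuinely different inequalities. So the claim that both halves of the corollary ``fall out of the single application of $\sim$'' is false; as written you have proved only the ``$+$'' case (plus a rederivation of (\ref{eqando})).

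The repair is immediate, which is why this is a gap rather than a dead end: the missing ``$-$'' case is literally inequality (\ref{eqmain}), i.e.\ one half of Theorem \ref{thm22}, so it can simply be quoted; alternatively, apply $\sim$ to the \emph{second} display of Theorem \ref{thm25}, $(\tr A)I_{nm} + (\tr_1 A)\otimes I_m \ge I_n\otimes(\tr_2 A) + \widetilde{A}$, which transforms to $(\tr A)I_{mn} + I_m\otimes(\tr_1 A) \ge (\tr_2 A)\otimes I_n + A$ and rearranges exactly to the ``$-$'' case. With that correction your argument, including the passage to the corollary's second (``equivalently'') display, is sound.
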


\section{Applications}

As promised, 
we shall provide some applications of Theorem \ref{thm22} and Corollary \ref{coro26} 
in the field of numerical inequalities. 
The Cauchy-Khinchin inequality  
is well-known in the literature (see \cite[Theorem 1]{van98}), 
it  states that if $X=(x_{ij})$ is a real $m\times n$ matrix, then 
\begin{equation} \label{eqck} 
 \left(\sum_{i=1}^m\sum_{j=1}^nx_{ij}\right)^2+mn\sum_{i=1}^m\sum_{j=1}^nx_{ij}^2\ge m\sum_{i=1}^m\left(\sum_{j=1}^nx_{ij}\right)^2+n\sum_{j=1}^n\left(\sum_{i=1}^mx_{ij}\right)^2. 
\end{equation}

Next, we will give  a generallization and extension of (\ref{eqck})
 by using Theorem \ref{thm22} and Corollary \ref{coro26}, respectively; 
see, e.g., \cite{Lin16} for more determinantal inequalities.

  	\begin{corollary} \label{coro24}
 Let $X=(x_{ij})$ be a real $m\times n$ matrix. Then
  		\begin{eqnarray*}    
mn\sum_{i=1}^m\sum_{j=1}^nx_{ij}^2 - n\sum_{j=1}^n\left(\sum_{i=1}^mx_{ij}\right)^2
\ge \left| m\sum_{i=1}^m\left(\sum_{j=1}^nx_{ij}\right)^2  -
 \left(\sum_{i=1}^m\sum_{j=1}^nx_{ij}\right)^2 \right|. 
  		\end{eqnarray*}
  	\end{corollary}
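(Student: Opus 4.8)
The plan is to derive Corollary \ref{coro24} from Theorem \ref{thm22} by choosing a suitable rank-one positive semidefinite block matrix and then reading off the resulting scalar inequality. Concretely, given the real $m\times n$ matrix $X=(x_{ij})$, I would stack its rows (or columns) into a single vector $v\in\mathbb{R}^{mn}$ and set $A=vv^{\mathsf T}\in\mathbb{M}_m(\mathbb{M}_n)$, so that $A\ge 0$ automatically. The point is to arrange the indexing so that the $(i,j)$-block $A_{i,j}$ of $A$ is the $n\times n$ matrix whose $(k,\ell)$-entry is $x_{ik}x_{j\ell}$; then $\tr A_{i,j}=\sum_{k=1}^n x_{ik}x_{jk}$ is exactly the inner product of rows $i$ and $j$ of $X$.

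With this choice I would compute each of the four matrices appearing in Theorem \ref{thm22}: we get $\tr A=\sum_{i,j}x_{ij}^2$ (the total sum of squares), $\tr_2 A=[\sum_k x_{ik}x_{jk}]_{i,j=1}^m$, $\tr_1 A=\sum_i A_{i,i}=[\sum_i x_{ik}x_{i\ell}]_{k,\ell=1}^n$, and of course $A$ itself. The inequality of Theorem \ref{thm22} is a matrix inequality in $\mathbb{M}_{mn}$; to extract a scalar statement I would compress it by a well-chosen unit vector $w$, i.e. apply $w^{\mathsf T}(\cdot)w$ to both sides. The natural candidate is the normalized all-ones vector (or, after undoing the normalization, $w=\mathbf 1_{mn}$), because $\mathbf 1^{\mathsf T}(B\otimes I_n)\mathbf 1$, $\mathbf 1^{\mathsf T}(I_m\otimes C)\mathbf 1$ and $\mathbf 1^{\mathsf T}A\mathbf 1$ all collapse to clean double sums: $\mathbf 1^{\mathsf T}A\mathbf 1=(\sum_{i,j}x_{ij})^2$, $\mathbf 1^{\mathsf T}\bigl((\tr_2 A)\otimes I_n\bigr)\mathbf 1=n\sum_{j}(\sum_i x_{ij})^2$, $\mathbf 1^{\mathsf T}\bigl(I_m\otimes(\tr_1 A)\bigr)\mathbf 1=m\sum_{i}(\sum_j x_{ij})^2$, and $\mathbf 1^{\mathsf T}\bigl((\tr A)I_{mn}\bigr)\mathbf 1=mn\sum_{i,j}x_{ij}^2$. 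Substituting these into the two-sided inequality $(\tr A)I_{mn}-(\tr_2 A)\otimes I_n\ge\pm\bigl(I_m\otimes(\tr_1 A)-A\bigr)$ yields precisely
\[
mn\sum_{i,j}x_{ij}^2 - n\sum_{j}\Bigl(\sum_i x_{ij}\Bigr)^2 \ge \Bigl| m\sum_{i}\Bigl(\sum_j x_{ij}\Bigr)^2 - \Bigl(\sum_{i,j}x_{ij}\Bigr)^2\Bigr|,
\]
which is the claim.

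The main obstacle—really the only delicate point—is bookkeeping: getting the block/tensor index conventions aligned so that the partial traces $\tr_1 A$ and $\tr_2 A$ of the rank-one matrix $A=vv^{\mathsf T}$ come out matching the row-sums and column-sums in the intended way, and verifying that compression by $\mathbf 1_{mn}$ turns each Kronecker term into the stated double sum rather than its transpose counterpart. Once the dictionary between ``stack the rows of $X$'' and ``$(i,j)$-block has entries $x_{ik}x_{j\ell}$'' is pinned down, every quantity is a routine computation and no further inequality is needed beyond Theorem \ref{thm22} itself. I would also remark that feeding the same $A$ into Corollary \ref{coro26} in place of Theorem \ref{thm22} gives the companion inequality stated as Corollary \ref{coro25}, and that adding the two sides of Corollary \ref{coro24} (removing the absolute value) recovers the classical Cauchy–Khinchin inequality \eqref{eqck}, which is the sense in which this is a genuine generalization.
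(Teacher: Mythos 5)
Your proposal is correct, but it is a ``dual'' instantiation of the argument rather than the one the paper uses. The paper keeps the data in the test vector and puts the combinatorics into $A$: it sets $A=J_m\otimes J_n$ (the all-ones matrix, which is rank one and positive semidefinite), so that Theorem \ref{thm22} becomes the single universal matrix inequality $mnI_{mn}-nJ_m\otimes I_n\ge\pm\bigl(mI_m\otimes J_n-J_m\otimes J_n\bigr)$, and then compresses both sides by $\mathrm{vec}\,X$, using exactly the four quadratic-form identities you list (with the roles of $A$ and the compressing vector exchanged). You instead encode the data as $A=(\mathrm{vec}\,X)(\mathrm{vec}\,X)^{\mathsf T}$, compute $\tr_1A=X^{\mathsf T}X$, $\tr_2A=XX^{\mathsf T}$, $\tr A=\sum_{i,j}x_{ij}^2$, and compress by $\mathbf 1_{mn}=\mathbf 1_m\otimes\mathbf 1_n$; your bookkeeping (the $(i,j)$-block of $vv^{\mathsf T}$ being $[x_{ik}x_{j\ell}]_{k,\ell}$, and the Kronecker identity $(\mathbf 1_m\otimes\mathbf 1_n)^{\mathsf T}(B\otimes C)(\mathbf 1_m\otimes\mathbf 1_n)=(\mathbf 1_m^{\mathsf T}B\mathbf 1_m)(\mathbf 1_n^{\mathsf T}C\mathbf 1_n)$) checks out, so the four scalars you obtain are the right ones and the corollary follows. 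Both routes are one application of Theorem \ref{thm22} to a rank-one positive semidefinite matrix followed by a vector compression, and neither needs anything beyond that theorem; the paper's choice has the minor advantage that the intermediate matrix inequality is independent of $X$ (one fixed inequality tested against every $\mathrm{vec}\,X$), whereas your choice invokes the theorem once per matrix $X$ but makes it transparent that only the Gram matrices $XX^{\mathsf T}$ and $X^{\mathsf T}X$ enter. Your closing remarks are also consistent with the paper: the same substitution into Corollary \ref{coro26} gives Corollary \ref{coro25}, and summing the two branches of the absolute value recovers the classical Cauchy--Khinchin inequality (\ref{eqck}).
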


  	\begin{proof} 
Let  $\mathrm{vec}\, X=[x_{11}, \ldots, x_{1n}, x_{21}, \ldots, x_{2n}, \ldots, x_{m1}, \ldots, x_{mn}]^T$ be a vectorization of $X$ and let $J_n$ be an $n$-square matrix with all entries 1.  
Then a simple calculation gives
  			\begin{eqnarray*} 
  	 (\mathrm{vec}\, X)^TI_{mn}(\mathrm{vec}\, X) &=&(\mathrm{vec}\, X)^T 
{\mathrm{vec}\, X}=\sum_{i=1}^m\sum_{j=1}^nx_{ij}^2,\\  		
  	 (\mathrm{vec}\, X)^T(I_m\otimes J_n)(\mathrm{vec}\, X)
   &=&\sum_{i=1}^m\left(\sum_{j=1}^nx_{ij}\right)^2, 	 \\
(\mathrm{vec}\, X)^T(J_m\otimes I_n)(\mathrm{vec}\, X) 
   &=&\sum_{j=1}^n\left(\sum_{i=1}^mx_{ij}\right)^2,\\ 
(\mathrm{vec}\, X)^T (J_m\otimes J_n)(\mathrm{vec}\, X) 
&=&(\mathrm{vec}\, X)^T  J_{mn} (\mathrm{vec}\, X)=\left(\sum_{i=1}^m\sum_{j=1}^nx_{ij}\right)^2.
  			\end{eqnarray*}
  		Thus the  desired inequality is equivalent to
  	\begin{equation} \label{ck} \begin{aligned}  
&(\mathrm{vec}\, X)^T ( mnI_{mn}  -nJ_m\otimes I_n )(\mathrm{vec}\, X) \\
  &\quad \ge 
\bigl| (\mathrm{vec}\, X)^T ( mI_m\otimes J_n - J_m\otimes J_n )(\mathrm{vec}\, X) \bigr|.
  		\end{aligned} \end{equation}   
Setting $A =J_m\otimes J_n$ in Theorem \ref{thm22} yields 
$$ mnI_{mn} -nJ_m\otimes I_n \ge 
\pm (mI_m\otimes J_n - J_m\otimes J_n),$$ 
and so (\ref{ck}) immediately follows. 
 \end{proof}

With the same method in the proof of Corollary \ref{coro24}, 
 the following corollary can be obtained from Corollary \ref{coro26},  
we omit the details of the proof. 

\begin{corollary} \label{coro25}
 Let $X=(x_{ij})$ be a real $m\times n$ matrix. Then
  		\begin{eqnarray*}    
mn\sum_{i=1}^m\sum_{j=1}^nx_{ij}^2 + n\sum_{j=1}^n\left(\sum_{i=1}^mx_{ij}\right)^2
\ge    m\sum_{i=1}^m\left(\sum_{j=1}^nx_{ij}\right)^2 + 
\left(\sum_{i=1}^m\sum_{j=1}^nx_{ij}\right)^2 .
  		\end{eqnarray*}
\end{corollary}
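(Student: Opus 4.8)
The plan is to reuse the argument for Corollary~\ref{coro24} almost verbatim, replacing the appeal to Theorem~\ref{thm22} by an appeal to Corollary~\ref{coro26}. First I would let $\mathrm{vec}\,X=[x_{11},\dots,x_{1n},x_{21},\dots,x_{2n},\dots,x_{m1},\dots,x_{mn}]^T$ and write $J_k$ for the $k$-square all-ones matrix. Exactly as in the proof of Corollary~\ref{coro24}, a direct computation gives $(\mathrm{vec}\,X)^T I_{mn}(\mathrm{vec}\,X)=\sum_{i=1}^m\sum_{j=1}^n x_{ij}^2$, $(\mathrm{vec}\,X)^T(I_m\otimes J_n)(\mathrm{vec}\,X)=\sum_{i=1}^m\bigl(\sum_{j=1}^n x_{ij}\bigr)^2$, $(\mathrm{vec}\,X)^T(J_m\otimes I_n)(\mathrm{vec}\,X)=\sum_{j=1}^n\bigl(\sum_{i=1}^m x_{ij}\bigr)^2$ and $(\mathrm{vec}\,X)^T(J_m\otimes J_n)(\mathrm{vec}\,X)=\bigl(\sum_{i=1}^m\sum_{j=1}^n x_{ij}\bigr)^2$. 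Hence the inequality to be proved is equivalent to $(\mathrm{vec}\,X)^T\bigl(mnI_{mn}+n\,J_m\otimes I_n-J_m\otimes J_n-m\,I_m\otimes J_n\bigr)\mathrm{vec}\,X\ge 0$, so it suffices to establish the matrix inequality $mnI_{mn}+n\,J_m\otimes I_n\ge J_m\otimes J_n+m\,I_m\otimes J_n$.

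Next I would apply Corollary~\ref{coro26} to $A=J_m\otimes J_n\in\mathbb{M}_m(\mathbb{M}_n)$, which is positive semidefinite since each $J_k$ is a rank-one positive semidefinite matrix and a tensor product of positive semidefinite matrices is positive semidefinite. For $Y\in\mathbb{M}_m$ and $Z\in\mathbb{M}_n$ one has $\tr(Y\otimes Z)=(\tr Y)(\tr Z)$, $\tr_1(Y\otimes Z)=(\tr Y)Z$ and $\tr_2(Y\otimes Z)=(\tr Z)Y$, so in our case $\tr A=mn$, $\tr_1 A=mJ_n$ and $\tr_2 A=nJ_m$. Taking the ``$+$'' choice of signs in Corollary~\ref{coro26}, that is, $(\tr A)I_{mn}+(\tr_2 A)\otimes I_n\ge A+I_m\otimes(\tr_1 A)$, this becomes exactly
\[ mnI_{mn}+n\,J_m\otimes I_n\ \ge\ J_m\otimes J_n+m\,I_m\otimes J_n, \]
which is the matrix inequality isolated above.

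Finally I would evaluate the quadratic form at $\mathrm{vec}\,X$: by the four identities of the first paragraph the left-hand side becomes $mn\sum_{i,j}x_{ij}^2+n\sum_{j}\bigl(\sum_{i}x_{ij}\bigr)^2$ and the right-hand side becomes $\bigl(\sum_{i,j}x_{ij}\bigr)^2+m\sum_{i}\bigl(\sum_{j}x_{ij}\bigr)^2$, which is precisely the claimed inequality. I do not expect any genuine obstacle; the proof is a straight specialization. The one point worth noting is that here one uses the ``$+$'' instance of Corollary~\ref{coro26} --- the nontrivial one, which dispenses with the PPT hypothesis of Proposition~\ref{prop23} --- rather than the full two-sided $\pm$ bound exploited for Corollary~\ref{coro24}; this is why Corollary~\ref{coro25} comes out as a plain inequality with no absolute value. (Alternatively one may note that $(J_m\otimes J_n)^{\tau}=J_m\otimes J_n$, so $A$ is PPT and Proposition~\ref{prop23} could be applied to it directly.)
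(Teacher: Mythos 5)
Your proof is correct and is exactly the argument the paper intends: it specializes Corollary~\ref{coro26} (the ``$+$'' case) to $A=J_m\otimes J_n$ and evaluates the resulting matrix inequality at $\mathrm{vec}\,X$, just as in the proof of Corollary~\ref{coro24}, with the details the paper explicitly omits. Your closing remark about $J_m\otimes J_n$ being PPT, so that Proposition~\ref{prop23} also suffices, matches the paper's own remark as well.
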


{\bf Remark.} 
Note that  $J_m\otimes J_n$ is not only 
a positive semidefinite matrix but also a PPT matrix, 
hence the weaker result  Proposition \ref{prop23} 
can also yields  Corollary \ref{coro25}.

\section{Appendix}

Motivated by the observation of Lin \cite[Proposition 2.2]{Lin16}, 
we next provide an alternative proof of Theorem \ref{thm22} 
by induction on the number of blocks of matrix. 
The following proof is  more transparent than that 
 in Section \ref{sec2}. 
We remark here that this proof has its root  in  \cite{Lin16} with slight differences.

\begin{proof}
The proof is by induction on $m$. 
Clearly, when $m=1$, there is nothing to show. 
Moreover the base case   $m=2$ was also proved in Section \ref{sec2}. Suppose the result (\ref{eqmain}) is true for $m=k-1>1$, 
and then we consider the case $m=k$, 
  		\begin{align*}  
\Gamma &:=(\tr A)I_{kn}+ I_k \otimes(\tr_1 A)-A-(\tr_2 A) \otimes I_n	\\
&= \left(\tr \sum_{i=1}^{k}A_{i,i}\right)I_{kn}
+I_k\otimes \left(\sum_{j=1}^k A_{j,j}\right)-A- 
\left( [\tr A_{i,j}]_{i,j=1}^k\right) \otimes I_n   		\\
&= \begin{bmatrix}  \sum_{i=1}^{k-1}(\tr A_{i,i})I_n &  &&  \\ &\ddots  & \\& &   \sum_{i=1}^{k-1}(\tr A_{i,i})I_n& \\ & & &  0 \end{bmatrix} \\
&\quad  + \begin{bmatrix} (\tr A_{k,k})I_n &  &&  \\ &\ddots  & \\& & (\tr A_{k,k})I_n & \\ & & &   \sum_{i=1}^k(\tr A_{i,i})I_n \end{bmatrix} 	\\ 
&\quad   + \begin{bmatrix}  \sum_{i=1}^{k-1}  A_{i,i} &  &&  \\ &\ddots  & \\& &   \sum_{i=1}^{k-1}  A_{i,i}& \\ & & &  0 \end{bmatrix}+ \begin{bmatrix}   A_{k,k} &  &&  \\ &\ddots  & \\& &   A_{k,k} & \\ & & &   \sum_{i=1}^k  A_{i,i} \end{bmatrix}	\\ 
& \quad  - \begin{bmatrix}   A_{1,1} & \cdots  & A_{1, k-1}& 0 \\ \vdots & & \vdots  & \vdots \\ A_{k-1,1}& \cdots &  A_{k-1, k-1}  & 0 \\ 0 & \cdots &  0&  0 \end{bmatrix}-\begin{bmatrix}  0 & \cdots  & 0&  A_{1, k} \\ \vdots & & \vdots  & \vdots \\0& \cdots &  0  & A_{k-1,k}  \\ A_{k,1} & \cdots &  A_{k,k-1} &  A_{k,k} \end{bmatrix}   	\\
&\quad   - \begin{bmatrix}  (\tr A_{1,1})I_n & \cdots  & (\tr A_{1, k-1})I_n& 0 \\ 
 \vdots & & \vdots  & \vdots \\ (\tr A_{k-1,1})I_n& \cdots &  (\tr A_{k-1, k-1})I_n  & 0 \\ 0 & \cdots &  0&  0 \end{bmatrix} \\
&\quad  -\begin{bmatrix}  0 & \cdots  & 0& (\tr A_{1, k})I_n \\ 
  \vdots & & \vdots  & \vdots \\0& \cdots &  0  & (\tr A_{k-1,k})I_n  \\ 
  (\tr A_{k,1})I_n & \cdots &  (\tr A_{k,k-1})I_n &  (\tr A_{k,k})I_n \end{bmatrix}.
\end{align*} 	
  By rearranging the terms, we may write
\[\Gamma =\Gamma_1 +\Gamma_2, \]
  		where 	
\begin{align*}  
\Gamma_1&:=\begin{bmatrix}  \sum_{i=1}^{k-1}(\tr A_{i,i})I_n &  &&  \\ &\ddots  & \\& &   \sum_{i=1}^{k-1}(\tr A_{i,i})I_n& \\ & & &  0 \end{bmatrix}
  		  +  \begin{bmatrix}  \sum_{i=1}^{k-1}  A_{i,i} &  &&  \\ &\ddots  & \\& &   \sum_{i=1}^{k-1}  A_{i,i}& \\ & & &  0 \end{bmatrix} \\ 
& \phantom{:}- \begin{bmatrix}   A_{1,1} & \cdots  & A_{1, k-1}& 0 \\ \vdots & & \vdots  & \vdots \\ A_{k-1,1}& \cdots &  A_{k-1, k-1}  & 0 \\ 0 & \cdots &  0&  0 \end{bmatrix} - \begin{bmatrix}  (\tr A_{1,1})I_n & \cdots  & (\tr A_{1, k-1})I_n& 0 \\ \vdots & & \vdots  & \vdots \\ (\tr A_{k-1,1})I_n& \cdots &  (\tr A_{k-1, k-1})I_n  & 0 \\ 0 & \cdots &  0&  0 \end{bmatrix},
  		\end{align*} 
and	
\begin{align*}  
\Gamma_2&:= \begin{bmatrix} (\tr A_{k,k})I_n &  &&  \\ &\ddots  & \\& & (\tr A_{k,k})I_n & \\ & & &   \sum_{i=1}^k(\tr A_{i,i})I_n \end{bmatrix}
  			+ \begin{bmatrix}   A_{k,k} &  &&  \\ &\ddots  & \\& &   A_{k,k} & \\ & & &   \sum_{i=1}^k  A_{i,i} \end{bmatrix}  \\
& \phantom{:} - \begin{bmatrix}  0 & \cdots  & 0&  A_{1, k} \\ \vdots & & \vdots  & \vdots \\0& \cdots &  0  & A_{k-1,k}  \\ A_{k,1} & \cdots &  A_{k,k-1} &  A_{k,k} \end{bmatrix}	 - 
\begin{bmatrix}  0 & \cdots  & 0& (\tr A_{1, k})I_n \\ 
 \vdots & & \vdots  & \vdots \\ 
0& \cdots &  0  & (\tr A_{k-1,k})I_n  \\ 
(\tr A_{k,1})I_n & \cdots &  (\tr A_{k,k-1})I_n &  (\tr A_{k,k})I_n \end{bmatrix} 		\\
&\phantom{:}= \begin{bmatrix}  (\tr A_{k,k})I_n+A_{k,k}  &   &  & -A_{1,k}-(\tr A_{1, k})I_n \\     
   & \ddots  &  & \vdots  \\  &  &  (\tr A_{k,k})I_n+A_{k,k}   &  -A_{k-1,k}-(\tr A_{k-1,k})I_n  \\ 
  -A_{k,1}-(\tr A_{k,1})I_n & \cdots &  -A_{k,k-1}-(\tr A_{k,k-1})I_n &  
   \sum_{i=1}^{k-1}\big((\tr A_{i,i})I_n+A_{i,i}\big)  \end{bmatrix}.
\end{align*} 	
  		Now by induction hypothesis, we get that $\Gamma_1$ is positive semidefinite. 
It remains to show that   $\Gamma_2$ is also positive semidefinite. 
  		
Observing that  $\Gamma_2$ can be written as a sum of $k-1$  
 matrices, in which each summand is $*$-congruent to 
$$H_i:=\begin{bmatrix}  (\tr A_{k,k})I_n+A_{k,k}  &    -A_{i,k}-(\tr A_{i, k})I_n \\   
 - A_{k,i}-(\tr A_{k,i})I_n &  (\tr A_{i,i})I_n+A_{i,i} \end{bmatrix}, \quad 
  		 i=1, 2,\ldots, k-1.$$  
Just like the proof of the base case, 
we infer from Lemma \ref{lem21} that $H_i\ge 0$ for all $i=1,2, \ldots, k-1$. 
Therefore, $\Gamma_2 \ge 0$, thus the proof of induction step is complete.  
  	   \end{proof}

\section*{Acknowledgments}
All authors would like to express sincere thanks to Prof. Tsuyoshi Ando for sharing \cite{Ando14} 
before its publication. 
The first author would like to express his hearty gratitude to 
Prof. Minghua Lin and Prof. Xiaohui Fu for detailed comments and  constant encouragement. 
This work was supported by  NSFC (Grant Nos. 11671402 and 11931002),  
Hunan Provincial Natural Science Foundation (Grant Nos. 2016JJ2138 and 2018JJ2479) 
and  Mathematics and Interdisciplinary Sciences Project of CSU.

\end{document}